\title{A note on singular Hermitian Yang-Mills connections}
\author{Yang Li 	\thanks{Y.L. is supported by the Engineering and Physical Sciences Research Council [EP/L015234/1], the EPSRC Centre for Doctoral Training in Geometry and Number Theory (The London School of Geometry and Number Theory), University College London. The author is also funded by Imperial College London for his PhD studies. }}
\def\@maketitle{%
	\newpage
	\null
	\vskip 2em%
	\begin{center}%
		\let \footnote \thanks
		{\Large\bfseries \@title \par}%
		\vskip 1.5em%
		{\normalsize
			\lineskip .5em%
			\begin{tabular}[t]{c}%
				\@author
			\end{tabular}\par}%
		\vskip 1em%
		{\normalsize \@date}%
	\end{center}%
	\par
	\vskip 1.5em}
\newtheorem{thm}{Theorem}[section]
\newtheorem{lem}[thm]{Lemma}
\theoremstyle{definition}
\newtheorem{eg}[thm]{Example}
\newtheorem{cor}[thm]{Corollary}
\newtheorem*{rmk}{Remark}
\newtheorem*{Acknowledgement}{Acknowledgement}
\newcommand{\cf}{\emph{cf.} }
\newcommand{\C}{\mathbb{C}}
\newcommand{\Z}{\mathbb{Z}}
\newcommand{\Lap}{\Delta}
\begin{document}
\maketitle

\begin{abstract}
We give an example of a homogeneous reflexive sheaf over $\C^3$ which admits a non-conical Hermitian Yang-Mills connection. This is expected to model bubbling phenomenon along complex codimension 2 submanifolds when the Fueter section takes zero value.
\end{abstract}

The guiding wisdom in the study of Hermitian Yang-Mills (HYM) connections is the Hitchin-Kobayashi correspondence principle, namely \emph{metric properties of HYM connections translate into algebro-geometric properties of holomorphic vector bundles and vice versa}. For instance, the celebrated Donaldson-Uhlenbeck-Yau theorem \cite{Siu} states that over compact K\"ahler manifolds, a holomorphic vector bundle admits a unique HYM connection if and only if it is polystable. This has been extended to reflexive sheaves by Bando and Siu \cite{BandoSiu}, and by now there is a developed theory comparing the compactified moduli spaces of HYM connections versus the stable vector bundles \cite{Ben}. In a more local setting, recent works \cite{JacobWalpuski2}\cite{Chen} show that for a large class of reflexive sheaf singularities, the local tangent cones of the HYM connections can be read off algebro-geometrically from the Harder-Narasimhan filtration.

\begin{rmk}
In our terminology, a HYM connection $A$ has \emph{tangent cone connection} $A_\infty$ if some rescaling sequence of $A$  converges to $A_\infty$ smoothly \emph{possibly away from a real codimension 4 subset}.
\end{rmk}

The purpose of this paper is to provide a surprising example which shows that in the noncompact setting the holomorphic structre alone \emph{does not} need to capture everything about HYM connections:

\begin{thm}
There is a HYM connection on the homogeneous reflexive sheaf $\ker( \underline{\C^3}\xrightarrow{(x,y,z)}\underline{\C}   )$ over the Euclidean space $\C^3$ with locally finite $L^2$ curvature, whose tangent cone at infinity is flat.
\end{thm}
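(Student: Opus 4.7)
The plan is to construct the desired HYM connection explicitly and verify its asymptotic properties. On $\C^3\setminus\{0\}$, identify $\mathcal{F}$ with the pullback $\pi^*\Omega^1_{\mathbb{P}^2}(1)$ via the projection $\pi\colon\C^3\setminus\{0\}\to\mathbb{P}^2$. Pulling back the Fubini--Study Hermite--Einstein metric gives the standard conical HYM connection $A_0$, which is scale invariant and hence is its own tangent cone at infinity; the theorem therefore demands a genuinely different HYM connection.

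The crucial local observation is that on an affine chart such as $\{x\ne 0\}\subset\C^3$, the sheaf $\mathcal{F}$ is holomorphically trivial, with frame $(-y/x,1,0)$, $(-z/x,0,1)$. Declaring this frame orthonormal gives a Hermitian ``metric'' $H_\infty$ with flat Chern connection; intrinsically, $H_\infty((a,b,c),(a,b,c))=|b|^2+|c|^2$, degenerating along the locus $\{x=0\}$. This supplies a candidate flat limit at infinity whose singular locus, after the scaling limit, restricts to a subset of real codimension four, which fits the definition of tangent cone in the remark.

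I would then seek a Hermitian metric $H$ on $\mathcal{F}|_{\C^3\setminus\{0\}}$ invariant under the residual subgroup $U(1)\times U(2)\subset U(3)$ stabilizing a distinguished direction (a symmetry preserved by both $H_0$ and $H_\infty$), interpolating $H\to H_0$ at the origin and $H\to H_\infty$ (suitably rescaled) at infinity. In this ansatz the Hermitian metric is described by a few scalar functions of $|x|$ and $|(y,z)|$, and $\Lambda F_H=0$ becomes an elliptic system on the two-dimensional quotient; under a further self-similar radial reduction it becomes a boundary-value ODE. The conical behaviour at $0$ forces $|F_H|^2\sim r^{-4}$ there, hence locally finite $L^2$-curvature, while the asymptotic condition at infinity identifies the tangent cone with the flat Chern connection of $H_\infty$ on the complement of a codimension four set.

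The principal difficulty is solving the reduced ODE boundary-value problem and extracting the precise asymptotics: one needs matched asymptotic expansions at both ends, together with a spectral analysis of the linearized HYM operator about the conical $A_0$, so as to identify the eigenmodes along which the conical solution admits a deformation interpolating to the flat regime. The special holomorphic structure of $\ker((x,y,z))$, in particular the codimension-two degeneracy of the flat candidate $H_\infty$, is what makes this interpolation possible, and is exactly the feature that is invisible to the Harder--Narasimhan filtration used in \cite{JacobWalpuski2,Chen}.
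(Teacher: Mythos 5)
Your proposal is a programme rather than a proof, and two of its structural assumptions are incompatible with what the theorem asserts. First, the reduction to a ``boundary-value ODE'' via a self-similar radial ansatz cannot produce the desired connection: a self-similar (conical) HYM connection is its own tangent cone at every scale, whereas the whole point of the theorem is that the tangent cone at the origin (the pullback of the $\Omega_{\mathbb{P}^2}$ connection, which is not flat) and the tangent cone at infinity (flat) are \emph{different}. The actual solution lives at the parabolic scale $|x|+|y|\sim |z|^{1/2}$, which is not preserved by the dilation $\vec{x}\mapsto \lambda\vec{x}$, so no one-variable reduction is available. Second, your candidate flat model $H_\infty$, degenerate along the divisor $\{x=0\}$, has singular set of real codimension \emph{two}, not four; a limit whose curvature concentrates on a divisor is not a tangent cone in the sense of the paper's remark, and would force the normalized energy $r^{-2}\int_{B(r)}|F|^2$ to grow like $r^2$, contradicting Price monotonicity. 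In the paper the curvature of the solution instead concentrates near the $z$-axis (real codimension four), transverse to which the connection is modelled on charge-one ASD instantons of scale $\sim|z|^{1/2}$ governed by the Fueter map $\zeta\mapsto(\zeta^{1/2},0)$; this is the mechanism that reconciles a flat tangent cone at infinity with nonvanishing normalized energy, and it is entirely absent from your picture. Your appeal to a spectral analysis of the linearization about the conical connection $A_0$ would at best yield solutions asymptotic to that cone; it does not address the transition to the flat regime, which is the heart of the matter.

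For comparison, the paper does not attempt a symmetry reduction or matched asymptotics at all. It writes down an explicit monad $\underline{\C}\xrightarrow{(x,y,1,0)^t}\underline{\C^4}\xrightarrow{(-y,x,0,z)}\underline{\C}$ with a carefully chosen non-flat Hermitian structure on $\underline{\C^4}$, computes from the monad curvature formula that the mean curvature of the induced metric $H_0$ decays like $\ell\sim\bigl((|x|^2+|y|^2+|z|)\,|\vec{x}|\bigr)^{-1}$, and then perturbs $H_0$ to an exact HYM metric by solving Dirichlet problems on exhausting balls $B(R)$ with boundary data $H_0$, controlling $\log(H_RH_0^{-1})$ uniformly in $R$ by a barrier built from the Newtonian potential of $\ell$ together with the Donaldson and Bando--Siu subharmonicity and interior estimates. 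To salvage your approach you would at minimum need to (i) replace $H_\infty$ by an ansatz exhibiting the instanton bubbling along the $z$-axis, (ii) abandon the one-dimensional reduction, and (iii) supply an actual existence and uniformity argument; at that point you would essentially be reconstructing the paper's proof.
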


Complex geometrically, the Euler sequence shows that the reflexive sheaf is isomorphic over $\C^3\setminus \{0\}$ to the pullback of the cotangent bundle of $\mathbb{CP}^2$ via $\C^3\setminus \{0\}\to \mathbb{CP}^2$. Indeed, by \cite{JacobWalpuski2}\cite{Chen} the local tangent cone at the origin must be the pullback of the Levi-Civita connection on $\Omega_{\mathbb{P}^2  }$ up to twisting by a central $U(1)$-connection. The surprise is that the HYM metric is not conical in our example, and the complex geometry does not predict the tangent cone at infinity. Thus the roles of the tangent cone at infinity and the local tangent cone must be fundamentally asymmetrical. This has a similar flavour to the recent discovery of exotic Calabi-Yau metrics on $\C^n$ \cite{Li}\cite{Ronan}\cite{Gabor}.

Despite the appearance our main result does not violate the Price monotonicity formula, which states that $r^{4-\text{dim}} \int_{B(r)} |F|^2$ must be an increasing function in the radius $r$. Geometrically, the curvature has faster than quadratic decay in the generic region near infinity, resulting in a flat tangent cone at infinity, but has slower than quadratic decay close to the $z$-axis, transverse to which the HYM connection is modelled on scaled copies of the standard one-instanton. Thus on large spheres the curvature becomes concentrated in a very small solid angle, and when we take the tangent cone the $L^2$ curvature is lost in the limit.

Our strategy is to produce an ansatz from the \emph{monad construction}, and then use some fairly standard nonlinear existence machinery  to find a HYM connection asymptotic to the ansatz near infinity. The monad construction is motivated by studying a family of HYM connections bubbling along a complex curve $S$ inside a Calabi-Yau 3-fold \cite{DonaldsonSegal}. Near a local patch of $S$, the HYM connections restricted to the normal directions to $S$ are modelled on (framed) ASD instantons, whose variation along $S$ is governed by a holomorphic map (called the \emph{Fueter map}) from $S$ to the framed instanton moduli space. We are interested in the simplest case of instantons with rank 2 and charge 1, so the framed moduli space is $\C^2/\Z_2$. Now the most generic kind of zero for a holomorphic map $\C\to \C^2/\Z_2$ is  up to linear change of coordinates given by $z\mapsto (z^{1/2}, 0)$ to leading order. Via the ADHM construction, this particular Fueter map translates into a monad ansatz for the HYM connection. In our actual construction we take a regularized version of the monad ansatz with better curvature decay properties.

%After zooming into the neighbourhood of the zero by scaling coordinates, the ambient metric is essentially Euclidean. The most generic kind of zero for a holomorphic map $\C\to \C^2/\Z_2$  By simultaneously applying holomorphic changes of coordinates on $\C$ and $\C^2/\Z_2$, the Fueter map can be modified to the canonical form $z\mapsto (z^{1/2},0)$. 

\begin{Acknowledgement}
The author thanks his supervisor Simon Donaldson and co-supervisor Mark Haskins for their inspirations, the Simons Center for hospitality, and Aleksander Doan for very useful discussions.
\end{Acknowledgement}

\section{An ansatz from monad construction}

We begin with a general curvature formula for the cohomology of a monad over a complex manifold.

\begin{lem}\label{curvatureformulamonadlemma}
Consider a monad   $E_0\xrightarrow{\alpha} E_1\xrightarrow{\beta} E_2$, namely a complex of Hermitian holomorphic vector bundles with $\alpha$ injective fibrewise and $\beta$ surjective fibrewise. Let $E=\ker \beta/\text{coker}( \alpha )$ be the cohomology bundle. Then the curvature $F_E$ of the natural induced connection on $E$ satisfies
\[
\langle F_E s, s'\rangle= 
\langle F_{E_1} s, s'\rangle
- \langle (\beta \beta^\dag)^{-1} (\nabla \beta) s, (\nabla \beta)s' \rangle
- \langle (\alpha^\dag \alpha)^{-1} (\nabla \alpha^\dag) s, (\nabla \alpha^\dag)s' \rangle,
\]
where $F_{E_1}$ is the Chern connection on $E_1$, and $s,s'$ are representing smooth sections of $E$ satisfying $\alpha^\dag s= \alpha^\dag s'= \beta s= \beta s'=0$, and $\nabla \alpha^\dagger$, $\nabla \beta$ are covariant derivatives computed on the Hom bundles.
\end{lem}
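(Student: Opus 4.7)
The plan is to realize $E$ as a smooth (not holomorphic) subbundle of $E_1$ and carry out a Gauss--Codazzi style computation. Using the Hermitian metric one identifies $E=\ker\beta/\mathrm{im}\,\alpha$ isometrically with $\ker\beta\cap\ker\alpha^\dag\subset E_1$. Since $\mathrm{im}\,\alpha\subset\ker\beta$, the complements $(\ker\beta)^\perp$ and $\mathrm{im}\,\alpha$ are mutually orthogonal, so the orthogonal projector onto $E$ splits cleanly as
\[
\pi_E = I - \beta^\dag(\beta\beta^\dag)^{-1}\beta - \alpha(\alpha^\dag\alpha)^{-1}\alpha^\dag.
\]
The main preliminary to verify is that the induced Chern connection on $E$ is $\nabla^E=\pi_E\circ\nabla^{E_1}|_E$. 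This follows by a two-step identification: the Chern connection on the holomorphic subbundle $\ker\beta$ is $\pi_{\ker\beta}\nabla^{E_1}|_{\ker\beta}$, and the Chern connection on the holomorphic quotient $\ker\beta/\mathrm{im}\,\alpha$, smoothly identified with the orthogonal complement of $\mathrm{im}\,\alpha$ inside $\ker\beta$, is again the orthogonal projection of the ambient connection.

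For $s\in\Gamma(E)$ decompose $\nabla^{E_1}s=\nabla^Es+A_1(s)+A_2(s)$, where $A_i$ land in $(\ker\beta)^\perp\otimes T^*$ and $\mathrm{im}\,\alpha\otimes T^*$ respectively. Differentiating the pointwise constraints $\beta s=0$ and $\alpha^\dag s=0$ via the Leibniz rule on the $\mathrm{Hom}$ bundles gives
\[
A_1(s)=-\beta^\dag(\beta\beta^\dag)^{-1}(\nabla\beta)s,\qquad A_2(s)=-\alpha(\alpha^\dag\alpha)^{-1}(\nabla\alpha^\dag)s.
\]
Applying $\nabla^{E_1}$ once more, using $(\nabla^{E_1})^2=F_{E_1}$ together with $\pi_E\circ\nabla^{E_1}=\nabla^E$ on $E$-valued forms, produces
\[
F_Es = \pi_EF_{E_1}s - \pi_E\nabla^{E_1}\!\bigl(A_1(s)+A_2(s)\bigr).
\]

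Pair against $s'\in\Gamma(E)$. By self-adjointness of $\pi_E$ the first term contributes $\langle F_{E_1}s,s'\rangle$. For the second term, metric compatibility of $\nabla^{E_1}$ applied to the identically vanishing pairing $\langle A_i(s),s'\rangle$ (since $A_i(s)\perp E$) yields $\langle\nabla^{E_1}A_i(s),s'\rangle=\langle A_i(s),\nabla^{E_1}s'\rangle$; expanding $\nabla^{E_1}s'=\nabla^Es'+A_1(s')+A_2(s')$, the $\nabla^Es'$ contribution vanishes pointwise and the cross term between $A_1(s)$ and $A_2(s')$ vanishes by the orthogonality of $(\ker\beta)^\perp$ and $\mathrm{im}\,\alpha$, leaving $\langle A_i(s),A_i(s')\rangle$. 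Substituting the explicit formulas and simplifying via $\langle\beta^\dag u,\beta^\dag v\rangle=\langle u,\beta\beta^\dag v\rangle$ (with self-adjointness of $(\beta\beta^\dag)^{-1}$, and the analogue for $\alpha$) reproduces the two subtracted terms in the statement. The only delicate point is tracking signs in the Leibniz rule for $E_1$-valued $1$-forms, but pairing with the $0$-form $s'$ bypasses this automatically, so I expect no serious obstacle beyond the preliminary identification of $\nabla^E$ with $\pi_E\nabla^{E_1}|_E$.
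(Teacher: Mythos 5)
Your argument is correct and is precisely the standard Gauss--Codazzi computation that the paper itself relies on (the lemma is stated without proof there, but the representing-section conditions $\alpha^\dag s=\beta s=0$ and the form of the two subtracted terms show this is exactly the intended route): identify $E$ with $\ker\beta\cap\ker\alpha^\dag$, check that the Chern connection is $\pi_E\nabla^{E_1}$, extract the two second fundamental forms $A_1(s)=-\beta^\dag(\beta\beta^\dag)^{-1}(\nabla\beta)s$ and $A_2(s)=-\alpha(\alpha^\dag\alpha)^{-1}(\nabla\alpha^\dag)s$ by differentiating the constraints, and pair against $s'$ using the orthogonality of $\mathrm{im}\,\beta^\dag$ and $\mathrm{im}\,\alpha$. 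All the individual steps (the clean splitting of $\pi_E$, the vanishing of the cross terms, and the simplification of $\langle A_i(s),A_i(s')\rangle$ via self-adjointness) check out, so nothing further is needed.
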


\begin{eg}\label{ADHMconstruction}
(ADHM construction of  one-instantons) Start from the monads over Euclidean $\C^2_{x,y}$ 
\[
\underline{\C} \xrightarrow{ \alpha= (x, y, a_1, a_2)^t }\underline{\C^4} \xrightarrow{ \beta=(-y,x,b_1, b_2) }\underline{\C},
\]
where the underlines signify trivial vector bundles, and the parameters $a_1, a_2, b_1, b_2$ satisfy the ADHM equation
\[
a_1b_1+ a_2b_2=0, \quad |a_1|^2+ |a_2|^2= |b_1|^2+ |b_2|^2>0.
\]
The natural connections on cohomology bundles $E_{a,b}$ are ASD instantons on $\C^2$ with rank 2, charge 1 and curvature scale $\sim \sqrt{ |a_1|^2+|a_2|^2}$. The situation with $a_1=a_2=b_1=b_2=0$ is viewed as a degenerate case.
As Hermitian vector bundles $E_{a,b}$ are identified as $\ker \beta \cap \ker \alpha^\dagger\subset \underline{\C^4}=\underline{\C^2}\oplus \underline{\C^2} $, and the monad construction provides a natural projection map into the second $\underline{\C^2} $ factor, giving a trivialisation of $E_{a,b}$ near infinity known as \emph{framing data}. Notice the framed instantons are isomorphic under the $U(1)$-symmetry
\[
(a_1,a_2,b_1,b_2)\mapsto (a_1 e^{i\theta}, a_2 e^{i\theta}, b_1 e^{-i\theta},b_2 e^{-i\theta} ).
\]
The \emph{moduli space of framed instantons} centred at the origin is 
\[
\{ a_1b_1+a_2b_2=0, |a_1|^2+ |a_2|^2= |b_1|^2+ |b_2|^2     \}/U(1) \simeq \{ a_1b_1+a_2b_2=0 \}/\C^* \simeq \C^2/\Z_2.
\]
\end{eg}

We now describe our \emph{main ansatz}. Take the monad over Euclidean $\C^3_{x,y,z}$
\begin{equation}\label{monad}
\underline{\C}  \xrightarrow{ \alpha=(x, y, 1, 0)^t }\underline{\C^4} \xrightarrow{ \beta=(-y,x,0, z) }\underline{\C},
\end{equation}
and equip the trivial bundle $\underline{\C^4}$ with
 the \emph{nonstandard Hermitian structure}  given by the diagonal matrix $h_{\C^4}=\text{diag}( (|\vec{x}|^2+1  )^{-1/2}, (|\vec{x}|^2+1  )^{-1/2},   1,1 )$ where $|\vec{x}|^2=|x|^2+|y|^2+|z|^2$. The monad has a unique singular point at the origin in $\C^3$ where $\beta$ fails to be surjective; in fact simple linear algebra shows the cohomology sheaf $E$ is isomorphic to the homogeneous coherent sheaf $\ker ( \underline{\C^3} \xrightarrow{ (x,y,z)} \underline{\C}   )$, so is in particular \emph{reflexive}. Using the Euler sequence over $\mathbb{P}^2$
\[
0\to \Omega_{\mathbb{P}^2 } \to \mathcal{O}(-1)^{\oplus 3} \to  \mathcal{O} \to 0,
\]
$E$ is isomorphic as vector bundles over $\C^3\setminus \{0\}$ to the pullback of $\Omega_{\mathbb{P}^2} $ via $\C^3\setminus \{0\}\to\mathbb{P}^2 $.

The key point is that the mean curvature on $E$ has fast enough decay at infinity.

\begin{lem}\label{meancurvaturebound}
The curvature $F_E$ of the natural connection on $E$ admits the estimate
$
|\Lambda F_E| \leq C \ell, 
$
where $\ell$ is a fixed function uniformly equivalent to
\begin{equation}\label{elldefinition}
\ell \sim \begin{cases}
 \frac{  1   }{ (|x|^2+|y|^2+|z|)|\vec{x}| }  , \quad  &|\vec{x}|\gtrsim 1,
\\
\frac{ 1 }{ |x|^2+|y|^2+|z|^2   }   , \quad & |\vec{x}|\lesssim 1.
\end{cases}
\end{equation}
\end{lem}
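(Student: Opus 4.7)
The plan is to apply Lemma~\ref{curvatureformulamonadlemma} to the monad (\ref{monad}). The formula expresses $\Lambda F_E$ as $\Lambda F_{E_1}|_E$ minus two positive correction terms coming from $\beta$ and from $\alpha$. My strategy is to bound each of these three contributions separately against $C\ell\,\|s\|^2_{E_1}$ for $s\in\ker\beta\cap\ker\alpha^\dagger$.

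Since $h_{\C^4}=\mathrm{diag}(h_1,h_1,1,1)$ with $h_1=(|\vec{x}|^2+1)^{-1/2}$ is diagonal with only the first two entries nontrivial, $F_{E_1}$ is block-diagonal, equal on each of its nontrivial $1\times 1$ blocks to $F_{h_1}=\tfrac{1}{2}\partial\bar\partial\log(|\vec{x}|^2+1)$, for which a direct computation gives $|\Lambda F_{h_1}|\lesssim(|\vec{x}|^2+1)^{-1}$. The remaining inputs are the fiberwise inner products
\[
\alpha^\dagger\alpha = h_1(|x|^2+|y|^2)+1, \qquad \beta\beta^\dagger = (|x|^2+|y|^2)/h_1+|z|^2,
\]
and the covariant derivatives of $\alpha^\dagger$ and $\beta$, which are easily computed from the explicit connection on $\underline{\C^4}$: $\nabla\beta$ is a $(1,0)$-form with leading part $\partial\beta=(-dy,dx,0,dz)$, and $\nabla\alpha^\dagger$ is purely $(0,1)$ with leading entries $h_1\,d\bar{x}_i$ on the first two slots.

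The key step then is to substitute the constraints $s_3=-h_1(\bar{x}s_1+\bar{y}s_2)$ (from $\alpha^\dagger s=0$) and $zs_4=ys_1-xs_2$ (from $\beta s=0$) into each of the three terms, and to compare the result against the metric norm $\|s\|^2_{E_1}=h_1(|s_1|^2+|s_2|^2)+|s_3|^2+|s_4|^2$. I would split into three regimes: (i) the bounded region $|\vec{x}|\lesssim 1$, (ii) the generic large-$|\vec{x}|$ region with $|x|^2+|y|^2\gtrsim |z|^2$, and (iii) the near-$z$-axis region with $|x|^2+|y|^2\ll|z|^2$. In regimes (i) and (ii), the bound $|\Lambda F_E|\leq C\ell$ follows by a direct pointwise estimate of each summand.

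The main obstacle is regime (iii), close to the $z$-axis: here the individual correction terms from $\beta$ and $\alpha$ can appear to exceed $\ell\,\|s\|^2_{E_1}$ by a factor of $|\vec{x}|$, so obtaining the sharp bound must either exploit cancellation between the three summands or use the kernel constraints more precisely. This delicate cancellation is precisely what the particular weight $(|\vec{x}|^2+1)^{-1/2}$ in $h_{\C^4}$ is designed to produce, in that it makes $E$ approximately Hermitian-Einstein at infinity. Organizing the case analysis in regime (iii) cleanly will be the most delicate step of the proof.
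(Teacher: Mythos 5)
Your setup coincides with the paper's proof: apply Lemma \ref{curvatureformulamonadlemma}, compute $\alpha^\dag\alpha$, $\beta\beta^\dag$, $\nabla\alpha^\dag$, $\nabla\beta$ explicitly (your formulas agree with (\ref{betabetadagger}) and (\ref{nablaalphabeta})), use the kernel constraints to control $|s_1|^2+|s_2|^2$ by $|s|_h^2$, and observe that the $F_{E_1}$ contribution is harmless. You also correctly diagnose that termwise estimation of the $\alpha$- and $\beta$-corrections fails near the $z$-axis by a factor of order $|\vec{x}|$. But the proposal stops exactly at the one non-routine step and leaves it as a disjunction (``either exploit cancellation \dots or use the kernel constraints more precisely''), so as written there is a genuine gap: you have not exhibited the mechanism that closes the factor of $|\vec{x}|$.

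The missing content is twofold. First, an algebraic observation: after contracting with $\Lambda$, the two correction 2-forms $(s_1 d\bar{x}+s_2 d\bar{y})\wedge(\bar{s}_1 dx+\bar{s}_2 dy)$ and $(-s_1 dy+s_2 dx)\wedge(-\bar{s}_1 d\bar{y}+\bar{s}_2 d\bar{x})$ both reduce to the \emph{same} quadratic form $|s_1|^2+|s_2|^2$, with opposite signs coming from the ordering of the holomorphic and antiholomorphic factors. Hence only the difference of the scalar coefficients matters, namely $(|\vec{x}|^2+1)^{-1}(\alpha^\dag\alpha)^{-1}-(\beta\beta^\dag)^{-1}$. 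Second, the elementary identity that the two denominators $(\alpha^\dag\alpha)(|\vec{x}|^2+1)=(|x|^2+|y|^2)(|\vec{x}|^2+1)^{1/2}+|\vec{x}|^2+1$ and $\beta\beta^\dag=(|x|^2+|y|^2)(|\vec{x}|^2+1)^{1/2}+|z|^2$ differ only by $|x|^2+|y|^2+1$, which yields the paper's estimate (\ref{cancellationeffect}): the difference is $O\bigl((\beta\beta^\dag)^{-1}(|\vec{x}|^2+1)^{-1/2}\bigr)$. Combined with $|s_1|^2+|s_2|^2\lesssim(|\vec{x}|+1)|s|_h^2$ this gives the bound $\ell|s|_h^2$ uniformly, with no case analysis in your regime (iii) at all; your intuition that the weight $(|\vec{x}|^2+1)^{-1/2}$ is responsible is correct, but without these two identities the plan does not yet constitute a proof, and a patchwork case analysis near the axis will not recover the factor of $|\vec{x}|$ by itself.
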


\begin{proof}
We  shall compute the curvature $F_E$. Taking into account the nonstandard Hermitian structure, the adjoint maps are given by
\[
\alpha^\dagger=( \bar{x}( |\vec{x}|^2+1)^{-1/2}, \bar{y}( |\vec{x}|^2+1)^{-1/2} , 1, 0  ), \quad \beta^\dagger= (-\bar{y}\sqrt{ |\vec{x}|^2+1}, \bar{x}\sqrt{ |\vec{x}|^2+1}, 0,  \bar{z}  )^t,
\]
hence 
\begin{equation}\label{betabetadagger}
\alpha^\dag \alpha= (|x|^2+|y|^2)( |\vec{x}|^2+1)^{-1/2}+1  , \quad \beta\beta^\dagger= (|x|^2+|y|^2)\sqrt{ |\vec{x}|^2+1}    +  |z|^2 .
\end{equation}
Since $\alpha, \beta$ are holomorphic, $\alpha^\dagger, \beta^\dag $ are antiholomorphic, 
\begin{equation}\label{nablaalphabeta}
\begin{cases}
\nabla \alpha^\dagger=\bar{\partial} \alpha^\dagger= ( d\bar{x}( |\vec{x}|^2+1)^{-1/2}- \frac{  \bar{x}(xd\bar{x}+ yd\bar{y}+ zd\bar{z}) }{ 2 ( |\vec{x}|^2+1)^{3/2}  } , d\bar{y}( |\vec{x}|^2+1)^{-1/2}- \frac{  \bar{y}(xd\bar{x}+ yd\bar{y}+ zd\bar{z}) }{ 2 ( |\vec{x}|^2+1)^{3/2}  }, 0, 0),
\\
\nabla \beta= ( \nabla \beta^\dag )^\dagger= ( \bar{\partial} \beta^\dag  )^\dagger= ( -dy - \frac{  y(\bar{x}dx + \bar{y}dy+ \bar{z}dz) }{ 2 ( |\vec{x}|^2+1)  }   , dx+ \frac{  x(\bar{x}dx + \bar{y}dy+ \bar{z}dz) }{ 2 ( |\vec{x}|^2+1)  } , 0, dz    ).
\end{cases}
\end{equation}

Let $s=(s_1, s_2,s_3,s_4)^t$ be a smooth local section of $E$ represented as a section of $\underline{\C^4}$ with $\beta s= \alpha^\dag s=0$, so by expressing $s_1, s_2$ in terms of $s_3, s_4$,
\[
|s_1|+|s_2| \leq \frac{ (|s_3|+|s_4|) (|\vec{x}|+1) }{ |x|+|y|  } .
\]
Under the Hermitian structure
\[
|s|_h^2= (|s_1|^2+ |s_2|^2 )(|\vec{x}|^2+1  )^{-1/2} +  (|s_3|^2+|s_4|^2),
\]
we have
\begin{equation}\label{s1+s2}
|s_1|+|s_2|\leq C\min\{ (|\vec{x}|+1  )^{1/2}, \frac{  |\vec{x}|+1 }{ |x|+|y|  } \} |s|_h.
\end{equation}

%\[
%\begin{cases}
%(\nabla \alpha^\dagger)s=(s_1 d\bar{x}+ s_2 d\bar{y}) ( |\vec{x}|^2+1)^{-1/2} + O( \frac{ (|s_1|+|s_2|)(|x|+|y|) } {  |\vec{x}|^2+1  }   ),
%\\
%(\nabla \beta) s= -s_1dy+ s_2dx+O( \frac{ (|s_1|+|s_2|)(|x|+|y|) } {  |\vec{x}|^2+1  }   ).
%\end{cases}
%\]

Now the Chern curvature on $\underline{\C^4}$ is given by $F_{E_1}= \bar{\partial} ( \partial h_{\C^4} h_{\C^4}^{-1} ),
$
so
\[
\langle \sqrt{-1} F_{E_1} s, s \rangle = O( \frac{|s_1|^2+|s_2|^2  }{  ( |\vec{x}|^2+1  )^{3/2}   }   ).
\]
Substituting (\ref{betabetadagger})(\ref{nablaalphabeta})(\ref{s1+s2}) into the curvature formula in Lemma \ref{curvatureformulamonadlemma}, 
\begin{equation}\label{curvatureansatz}
\begin{split}
\langle \sqrt{-1} \Lambda F_E s,s\rangle =& -(\alpha^\dag \alpha)^{-1} (|\vec{x}|^2+1)^{-1} \sqrt{-1} \Lambda (s_1 d\bar{x}+s_2d\bar{y})\wedge (\bar{s}_1 d{x}+\bar{s}_2d{y})\\
&- (\beta \beta^\dag)^{-1} \sqrt{-1} \Lambda (-s_1 dy+s_2dx)\wedge (-\bar{s}_1 d\bar{y}+\bar{s}_2d\bar{x})+ O( \ell |s|_h^2  ).
%
%& +O( \frac{ |s_1|^2+|s_2|^2}{  (|\vec{x}|^2+1)^{3/2}  }       )+ O( (\beta\beta^\dag)^{-1} |s_4|^2  ) + O(  \frac{ (|x|+|y| )  ( |s_1|^2+|s_2|^2 )}{ (\beta\beta^\dag) (|\vec{x}|^2+1)^{1/2}  }    )  .
\end{split}
\end{equation}
Combined with a cancellation effect expressed by the inequality
\begin{equation}\label{cancellationeffect}
\begin{split}
(|\vec{x}|^2+1)^{-1}(\alpha^\dag \alpha)^{-1}- (\beta \beta^\dag)^{-1}= O( \frac{ 1}{ (\beta\beta^\dag) \sqrt{ |\vec{x}|^2+1}     }  ),
\end{split}
\end{equation}
this implies
$
\langle \sqrt{-1} \Lambda F_E s,s\rangle= O( \ell |s|_h^2   ) ,      
$
or equivalently $|\Lambda F_E|=O(\ell)$ as required.
\end{proof}

\begin{rmk}\label{tangentconeatinfinity}
From the proof one can readily extract estimates on $|F_E|$. In particular near the origin $|F_E|=O(|\vec{x}|^{-2})$ is \emph{locally in $L^2$}. For $|x|+|y|\gtrsim |z|^{1/2}+1$, the curvature decays like $|F_E|=O( \frac{ |\vec{x}|}{   (|x|^2+|y|^2 )^2}  )$. In particular when $|x|+|y|\gtrsim |\vec{x}|\gg 1$, the curvature has cubic decay, so \emph{the tangent cone at infinity is flat}. The curvature becomes concentrated near the $z$-axis near spatial infinity.
\end{rmk}

For later usage, we estimate the potential integral
\begin{equation}
G(x,y,z)= \int_{\C^3} \frac{\ell(\vec{x}')}{ |(x-x',y-y',z-z')|^4  }  d\text{Vol}(\vec{x}').
\end{equation}

\begin{lem}\label{barrier}
The function $G>0$ is well defined on $\C^3\setminus \{0\}$, and satisfies
\[
\begin{cases}
\Lap G= \text{const}\cdot \ell, \\
 |G|\leq C |\vec{x}|^{-1} \max(  \log  \frac{|\vec{x}|}{ |x|+|y|+|z|^{1/2}  }, 1 ) , \quad |\vec{x}|\gtrsim 1.
\end{cases}
\]
\end{lem}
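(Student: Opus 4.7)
Since $|\vec{x}-\vec{x}'|^{-4}$ on $\R^6=\C^3$ is proportional to the Newtonian fundamental solution, and $\ell$ is bounded by $|\vec{x}|^{-2}$ near the origin (locally integrable in six real dimensions) with $\int\ell|\vec{x}|^{-4}d\mathrm{Vol}<\infty$ at infinity, standard potential theory gives that $G$ is finite on $\C^3\setminus\{0\}$ and satisfies $\Delta G=\mathrm{const}\cdot\ell$ distributionally on $\C^3$, and classically away from the origin.

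For the pointwise bound, fix $\vec{x}$ with $R:=|\vec{x}|\gtrsim 1$ and $q:=|x|^2+|y|^2+|z|$. The $U(2)\times U(1)$ symmetry of $\ell$ allows one to assume $\vec{x}=(\rho_x,0,0,0,s_x,0)$ in real coordinates on $\R^4\times\R^2$, with $\rho_x=\sqrt{|x|^2+|y|^2}$ and $s_x=|z|$. Decompose the integral into three pieces: the near-origin region $\{|\vec{x}'|\leq 1\}$, contributing $O(R^{-4})$ since $|\vec{x}-\vec{x}'|\sim R$ and $\int_{B(1)}\ell<\infty$; the far region $\{|\vec{x}'-\vec{x}|\geq R/2\}$, contributing $O(R^{-1})$ via the spherical-average computation $\int_{S^5(r)}\ell\sim r^2$ for $r\gtrsim 1$ combined with $|\vec{x}-\vec{x}'|\gtrsim\max(R,|\vec{x}'|)$; and the dominant near region $\{|\vec{x}'-\vec{x}|\leq R/2\}$, which is the sole source of the logarithm.

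For the near region, substitute $\vec{w}=\vec{x}'-\vec{x}=(\rho_w\omega_4,s_w\omega_2)$ with $\omega_4\in S^3$, $\omega_2\in S^1$. Since $|\vec{x}'|\sim R$, one has $\ell(\vec{x}')\lesssim R^{-1}(|x'|^2+|y'|^2+|z'|)^{-1}$. Integrate out the $S^3\times S^1$-angular variables using the elementary identity $\int_0^\pi\sin^2\theta\,d\theta/(a+b\cos\theta)=\pi/(a+\sqrt{a^2-b^2})$; the effective angular-averaged value of $(|x'|^2+|y'|^2+|z'|)^{-1}$ is comparable to $(\rho_w^2+s_w+q)^{-1}$. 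The remaining two-dimensional integral, up to the prefactor $R^{-1}$, is bounded by
\[
\int_0^{R/2}\!\!\int_0^{R/2}\frac{\rho_w^3 s_w\,d\rho_w\,ds_w}{(\rho_w^2+q)(\rho_w^2+s_w^2)^2},
\]
where we further dropped $s_w$ via $(\rho_w^2+s_w+q)^{-1}\leq(\rho_w^2+q)^{-1}$. Performing the $s_w$-integral via $\int s_w(\rho_w^2+s_w^2)^{-2}ds_w\sim\rho_w^{-2}$ and then $\int_0^{R/2}\rho_w(\rho_w^2+q)^{-1}d\rho_w\sim\log((R^2+q)/q)\sim\max(\log(R/\sqrt{q}),1)$ yields the claimed bound.

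The main obstacle is the angular-averaging step: $|x'|^2+|y'|^2+|z'|$ can vanish on positive-measure subsets of spheres around $\vec{x}$, so a pointwise lower bound is too crude and one must compute the angular average of the reciprocal itself. A potential spurious logarithm from the $S^1$-integral threatens when $s_x$ and $s_w$ nearly coincide; however, this can only occur when $\vec{x}$ is far from the $z$-axis (where $q\sim R^2$ so the outer $\max(\log(R/\sqrt{q}),1)$ is merely $O(1)$ and absorbs the effect), while near the $z$-axis the separation $s_w\leq R/2\leq s_x$ keeps the $S^1$-average bounded, so the single sharp logarithm comes from the $\rho_w$-integration alone.
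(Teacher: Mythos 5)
Your proposal is correct and follows essentially the same route as the paper: both verify well-definedness from the mild $O(|\vec{x}|^{-2})$ singularity of $\ell$, show that all dyadic scales away from $|\vec{x}'|\sim|\vec{x}|$ contribute only $O(|\vec{x}|^{-1})$ using $\int_{|\vec{x}'|\sim r}\ell\lesssim r^{3}$, and extract the logarithm from the dominant shell near $\vec{x}$. Your explicit $S^3\times S^1$ angular-averaging computation merely fills in (and unifies, via the quantity $q=|x|^2+|y|^2+|z|$) the two-case estimate that the paper asserts without detail, and your resolution of the potential extra logarithm from the $S^1$-average is sound.
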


\begin{proof}
The singularity of the source $\ell$ at the origin is $O( \frac{1}{ |\vec{x}|^2 }   )$, which is mild enough to guarantee the potential integral is well defined and has the correct Laplacian. We focus on the estimates for $|\vec{x}|\gtrsim 1$. The $L^1$ integral of the source $\ell$ at a scale $|\vec{x}'|\sim 2^k$ is bounded by
\[
\int_{ |\vec{x}'|\sim 2^k} \frac{ 1}{ (|x'|^2+|y'|^2) |\vec{x}'|    } d\text{Vol}(\vec{x}')\lesssim 2^{3k},
\]
so by summing over all dyadic scales,
\[
|G- \int_{|\vec{x}|\sim |\vec{x}'|} \frac{\ell(\vec{x}')  d\text{Vol}(\vec{x}')  }{ |(x-x',y-y',z-z')|^4  }  | \lesssim |\vec{x}|^{-1}.
\]
Thus the only important contribution comes from the dyadic scale $|\vec{x}|\sim |\vec{x}'|$, %which is bounded by
%\[
%\begin{split}
%|\vec{x}|^{-1}\int_{|\vec{x}|\sim |\vec{x}'|} \frac{ \ell(\vec{x}')       1}{ |(x-x',y-y',z-z')|^4  }  \frac{d\text{Vol}(\vec{x}')}{ |x'|^2+|y'|^2+|z'|   }
%\end{split}
%\]
which for $|x|^2+|y|^2\gtrsim |z|$ is controlled by
\[
\int_{|\vec{x}|\sim |\vec{x}'|} \frac{1}{ |(x-x',y-y',z-z')|^4  }  \frac{d\text{Vol}(\vec{x}')}{ (|x'|^2+|y'|^2)|\vec{x}|  } \lesssim  |\vec{x}|^{-1  }  \max(1, \log \frac{|\vec{x}|}{ |x|+|y|  }   ),
\]
and for $|x|^2+|y|^2\lesssim |z|$ is controlled by $O( |\vec{x}|^{-1} | \log   \frac{|\vec{x}| }{|z|^{1/2}}    | )$. Combining the above shows the claim.
\end{proof}

\subsection{Asymptotic geometry near infinity}

%Now we improve the asymptotic geometry in Remark \ref{tangentconeatinfinity} to higher order. %The basic picture is that the curvature decays slowly near the complex line $\{x=y=0\}$ with regularity scale $O(|z|^{1/2})$, but in the generic region it decays faster than quadratically and the connection is almost flat.

First we examine the asymptotic geometry for $|x|+|y|\gg |z|^{1/2}+1$, namely the generic region near spatial infinity. Consider the case $|x|\lesssim |y|$, so $|y|\gg |\vec{x}|^{1/2}+1$. A basis of holomorphic sections on $E$ can be represented by sections of $\ker \beta$:
\[
s_{(1)}= (0,0,1,0)^t, \quad s_{(2)}= (z/y, 0, 0, 1)^t.
\]
The projections of $s_1,s_2$ to the orthogonal complement of $\text{Im}(\alpha)$ are respectively
\[
\begin{cases}
s_1'= s_{(1)}- \alpha (\alpha^\dag \alpha)^{-1} \alpha^\dag s_{(1)} 
=s_{(1)}- \frac{1}{ (|x|^2+|y|^2)(|\vec{x}|^2+1)^{-1/2} +1        } (x,y,1,0)^t,
\\ 
s_2'= s_{(2)}- \alpha (\alpha^\dag \alpha)^{-1} \alpha^\dag s_{(2)}= s_{(2)}- \frac{\bar{x} z/y}{ |x|^2+|y|^2 +  (|\vec{x}|^2+1)^{1/2}        } (x,y,1,0)^t.
\end{cases}
\]
The Hermitian metric $H_0$ on the cohomology bundle is represented by the matrix
\[
H_0(s_{(i)},s_{(j)})= h_{\C^4}( s_i', s_j'  )= \delta_{ij}+ O( \frac{ |\vec{x}|}{ |y|^2 }   ).
\]
It follows from elementary Taylor expansions that
\[
|\partial^k H_0 |=  O( \frac{ |\vec{x}|}{ |y|^{2+k} }   ), \quad k\geq 1,
\]
where $\partial^k$ refers to the $k$-th partial derivatives. The natural connection $\nabla_E$ on $E$ is just the Chern connection induced by the Hermitian structure $H_0$, so in particular 
$
|F_E|= O( \frac{|\vec{x}|}{ |y|^4 }   )
$ compatible with Remark \ref{tangentconeatinfinity}. The mean curvature $\Lambda F_E$ has better decay properties. For this, we can apply the more accurate formula in Lemma \ref{curvatureformulamonadlemma} to derive an explict expression for the  matrix $( \langle \Lambda F_E s_i', s_j'\rangle )$ as formula (\ref{curvatureansatz}). The same cancellation effect as in (\ref{cancellationeffect}) happens. The higher order version of Lemma \ref{meancurvaturebound} then follows from Taylor expansion considerations:
\[
|\partial^k \langle \Lambda F_E s_i', s_j'\rangle |\leq C(k) |\vec{x}|^{-1} |y|^{-2-k}       ,
\]
or equivalently $|\nabla^k_E (\Lambda F_E) |\leq C(k) |\vec{x}|^{-1}  |y|^{-2-k} $.

Similarly in the case $|x|\gtrsim |y|$, we can find another basis of holomorphic sections on $E$, with
\[
H_0=\delta_{ij}+ O(\frac{ |\vec{x}|}{ |x|^{2} }    ), \quad |\partial^k H_0 |\leq C(k) \frac{ |\vec{x}|}{ |x|^{2+k} }   , \quad k\geq 1,
\]
and $|\nabla_E^k (\Lambda F_E) |\leq C(k) |\vec{x}|^{-1}|x|^{-2-k}  $. To summarize, the Hermitian structure on $E$ in the generic region $|x|+|y|\gg |z|^{1/2}+1$ is \emph{approximately flat}.

%\begin{equation}
%|\nabla_E^k  F_E|= O( \frac{ |\vec{x}|}{ (|x|+|y|)^{4+k} }   ), \quad |\nabla_E^k (\Lambda F_E)|= O( \frac{ |\vec{x}|}{ (|x|+|y|)^{4+k} }   ) \quad k\geq 0.
%\end{equation}
% with regularity scale $\rho\sim \frac{ |x|^2+|y|^2}{ |\vec{x}|^{1/2}  }  $.

Next we turn to the vicinity of the $z$-axis  $1\ll |z|^{1/2}\lesssim |x|+|y|$. Observe that if the ambient Hermitian structure on $\underline{\C^4}$ is changed from $h_{\C^4}$ to 
\[
 (|\vec{x}|^2+1)^{1/2} h_{\C^4}= \text{diag}( 1, 1,  (|\vec{x}|^2+1)^{1/2}, (|\vec{x}|^2+1)^{1/2}    ),
\]
then the induced connection on $E$ is \emph{twisted by a $U(1)$ connection} with curvature $\frac{1}{2}\bar{\partial} \partial \log (|\vec{x}|^2+1  )$. We shall focus on this twisted situation around a given point $(0,0, \zeta)\in \C^3$ with $|\zeta|\gg 1$, and choose a square root $\zeta^{1/2}$. After rescaling the basis vectors on $\underline{\C^4}$, the twisted monad can be written as
\begin{equation}\label{monadnearbubblelocus}
\C\xrightarrow{  ( x,y, \zeta^{1/2},0     )^t      } \underline{\C^4} \xrightarrow{ (-y,x, 0, \zeta^{1/2}) }\underline{\C},
\end{equation}
where the Hermitian structure on $\underline{\C^4}$ is $\tilde{h}_{ \C^4}=\text{diag}( 1, 1,  \frac{(|\vec{x}|^2+1)^{1/2}}{|\zeta| }, \frac{ |\zeta|  (|\vec{x}|^2+1)^{1/2} }{ |z|^2 }    )$. For $|x|+|y|+|z-\zeta|\lesssim |\zeta|^{1/2}$, Taylor expansion shows
\[
\begin{cases}
\tilde{h}_{ \C^4}= \text{diag}(1,1,1+O( \frac{ 1  +|z-\zeta| }{ |\zeta| }  ),1+O( \frac{ 1+|z-\zeta|}{ |\zeta| }  )   ),
\\
|\partial^k \partial_x \tilde{h}_{ \C^4}|+ |\partial^k \partial_y \tilde{h}_{ \C^4}|  \leq C(k)\frac{ 1}{ |\zeta|^{3/2+k/2} }  , \quad k\geq 0.
\\
|\partial^k \partial_z \tilde{h}_{ \C^4} |\leq C(k)\frac{1}{ |\zeta|^{1+k/2} }   ,  \quad  |\partial^k \partial_z \partial_{\bar{z}} \tilde{h}_{ \C^4} |\leq C(k)\frac{1}{ |\zeta|^{2+k/2} }, \quad k\geq 0.
\end{cases}
\]
To leading order, the ambient Hermitian metric on $\underline{\C^4}$ is Euclidean, and the twisted monad (\ref{monadnearbubblelocus}) dimensionally reduces to the monad in the  ADHM construction with parameters $(a_1,a_2,b_1,b_2)=(\zeta^{1/2}, 0, 0, \zeta^{1/2})$ (\cf Example \ref{ADHMconstruction}). In particular, the connection on the cohomology bundle of the twisted monad is approximately a framed instanton whose moduli parameter is identified as $(\zeta^{1/2},0)\in \C^2/\Z_2$.

\begin{rmk}
From a more global viewpoint, the twisted monad connection in the normal direction to the $z$-axis is described by the \emph{Fueter map} into the moduli space of framed instantons:
\begin{equation}
\C\to \C^2/\Z_2, \quad \zeta\mapsto (\zeta^{1/2}, 0).
\end{equation}
Notice the Fueter map is independent of the choice of square root $\pm \zeta^{1/2}$. As mentioned in the introduction, our monad construction (\ref{monad}) is best thought as a regularized version of the monad induced by this Fueter map. The extra issue of twisting by a $U(1)$ connection is required for the almost flatness of the connection far away from the $z$-axis.
\end{rmk}

One can then estimate the difference between $\nabla_E$ and the instanton connection $\nabla_\zeta$ associated with the ADHM monad, in the region $\{  |x|+|y|\lesssim |\zeta|^{1/2}, z=\zeta    \}$, by taking into account both the deviation of $\tilde{h}_{\C^4}$ from being Euclidean, and the effect of $U(1)$-twisting:
%\[
%| \nabla_\zeta^k ( \nabla_E- \nabla_\zeta )|= O( |z|^{-1-k/2}  ), \quad k\geq 0,
%\]
%and in particular
\[
|\nabla_\zeta^k (F_E- F_{\nabla_\zeta} )|\leq C(k) |z|^{-2-k/2}      , \quad | \nabla_E^k (\Lambda F_E)|\leq C(k) |z|^{-2-k/2}   ,
\]
and the regularity scale of $\nabla_E$ in this region is comparable to the regularity scale of $\nabla_\zeta$ which is $\sim |\zeta|^{1/2} $.

Combining the above, we have a unified higher order estimate for $\Lambda F$:

\begin{cor}\label{meancurvaturehigherorder}
In the region $|x|\gtrsim 1$, 
\[
|\nabla^k_E (\Lambda F_E) |= O( |\vec{x}|^{-1} (|x|+ |y|+|z|^{1/2} )^{-2-k}  ).
\]
\end{cor}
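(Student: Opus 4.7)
The plan is to treat the corollary as a packaging statement that merges the two regional estimates established earlier in the subsection. In the region $|\vec{x}|\gtrsim 1$, I would split the analysis into the generic region $|x|+|y|\gg |z|^{1/2}+1$ and the near-$z$-axis region $|z|^{1/2}\gtrsim |x|+|y|$ (these overlap in a transitional band $|x|+|y|\sim |z|^{1/2}$, which is harmless).

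In the generic region, the earlier work produced $|\nabla_E^k(\Lambda F_E)|\leq C(k)|\vec{x}|^{-1}|y|^{-2-k}$ when $|x|\lesssim |y|$ and the symmetric bound $|\nabla_E^k(\Lambda F_E)|\leq C(k)|\vec{x}|^{-1}|x|^{-2-k}$ when $|x|\gtrsim |y|$, so altogether $|\nabla_E^k(\Lambda F_E)|\lesssim |\vec{x}|^{-1}(|x|+|y|)^{-2-k}$. Since $|z|^{1/2}\lesssim |x|+|y|$ in this regime, we have $|x|+|y|+|z|^{1/2}\sim |x|+|y|$, and the claimed unified bound follows immediately.

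Near the $z$-axis, the ADHM approximation developed around the base point $(0,0,\zeta)$ gave $|\nabla_E^k(\Lambda F_E)|\leq C(k)|z|^{-2-k/2}$. In this regime $|\vec{x}|\sim |z|$ and $|x|+|y|+|z|^{1/2}\sim |z|^{1/2}$, so
\[
|\vec{x}|^{-1}(|x|+|y|+|z|^{1/2})^{-2-k}\sim |z|^{-1}\cdot |z|^{-1-k/2}=|z|^{-2-k/2},
\]
which matches the estimate. Combining the two regimes proves the corollary, with consistency on the transition band $|x|+|y|\sim |z|^{1/2}$ since both estimates reduce there to $|\vec{x}|^{-1}(|z|^{1/2})^{-2-k}$.

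There is no genuine obstacle: the only thing to check is the algebraic interpolation of the two bounds into a single expression involving $|x|+|y|+|z|^{1/2}$, which the preceding subsection was precisely organised to accommodate. If one wanted to be fully uniform, one could alternatively introduce a smooth partition of unity subordinate to the two regions and bound each piece separately using the regularity scale comparisons already recorded (namely that the natural length scale of $\nabla_E$ is $\sim (|x|+|y|+|z|^{1/2})$ near the $z$-axis and $\sim |x|+|y|$ in the generic region).
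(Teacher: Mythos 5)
Your proposal is correct and matches the paper's own (largely implicit) argument: the corollary is stated immediately after the two regional estimates with the phrase ``combining the above,'' and the content is exactly the algebraic observation that $|x|+|y|+|z|^{1/2}\sim |x|+|y|$ in the generic region while $|\vec{x}|\sim|z|$ and $|x|+|y|+|z|^{1/2}\sim|z|^{1/2}$ near the $z$-axis, so both bounds collapse to the unified expression. Your consistency check on the transition band and the remark about regularity scales are exactly the right sanity checks.
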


\section{Perturbation into HYM metric}

We seek a nonlinear perturbation of the ansatz to a genuine HYM connection. The strategy is to solve Dirichlet boundary value problems on larger and larger domains exhausting $\C^3$, obtain uniform estimates and then extract limits as in \cite{Ni}. The analysis involved is by now fairly standard.

%We begin by stating an existence theorem for singular Hermitian Yang-Mills connections over domains, which amounts to a simple combination of Donaldson's solution to the Dirichlet problem in the smooth case \cite{Donaldsonboundary}, and Bando and Siu's existence result for reflexive sheaves \cite{BandoSiu}.

\begin{thm}
Let $E$ be a reflexive sheaf over a compact K\"ahler manifold $(\overline{Z}, \omega)$ with nonempty boundary $\partial Z$, which is locally free near the boundary. For any Hermitian metric $f$ on the restriction of $E$ to $\partial Z$ there is a unique Hermitian $H$ on $E$, which is smooth on the locally free locus, has finite $L^2$ curvature, and
\[
\sqrt{-1}\Lambda F_H=0 \text{ in } Z, \quad H=f \text{ over } \partial Z.
\]
\end{thm}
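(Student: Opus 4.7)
The proof follows the Bando--Siu strategy adapted to the Dirichlet setting. Let $\Sigma\subset Z$ be the singular locus of $E$, a closed analytic subset of complex codimension at least three, disjoint from $\partial Z$ by hypothesis. Fix a smooth background Hermitian metric $H_0$ on $E|_{\overline{Z}\setminus\Sigma}$ with $H_0|_{\partial Z}=f$, constructed by extending $f$ across a collar of $\partial Z$ (where $E$ is locally free) and patching to an arbitrary smooth metric on the interior locally free locus via a partition of unity. Every candidate Hermitian metric on $E$ then has the form $H=H_0 h$ for a positive self-adjoint endomorphism $h$, and the Dirichlet condition translates to $h=\mathrm{Id}$ on $\partial Z$.

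Exhaust $\overline{Z}\setminus\Sigma$ by domains $Z_\epsilon=\overline{Z}\setminus N_\epsilon(\Sigma)$ with piecewise smooth boundary (for a.e.\ $\epsilon$), over each of which $E$ restricts to a holomorphic vector bundle on a compact K\"ahler manifold with boundary $\partial Z\sqcup \partial N_\epsilon(\Sigma)$. On each $Z_\epsilon$ the Dirichlet problem for HYM connections on a smooth holomorphic bundle has a unique smooth solution $H_\epsilon$ matching $f$ on $\partial Z$ and $H_0$ on $\partial N_\epsilon(\Sigma)$; this is the classical existence result of Donaldson, established by the continuity method or equivalently the parabolic Donaldson flow (see \cite{Ni}). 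The task is then to derive estimates on $h_\epsilon=H_0^{-1}H_\epsilon$ uniform in $\epsilon$ and pass to the limit.

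The main obstacle, and the step where reflexivity enters essentially, is the $L^\infty$ bound on $h_\epsilon$. Starting from the differential inequality $\Lap \log \mathrm{tr}\,h_\epsilon\ge -|\Lambda F_{H_0}|$ on $Z_\epsilon$, together with the prescribed boundary values of $h_\epsilon$, the Bando--Siu cutoff technique exploits the codimension-$3$ property of $\Sigma$ to construct Lipschitz cutoffs supported in $N_\epsilon(\Sigma)$ with $W^{1,2}$-norm $\to 0$ as $\epsilon\to 0$; an integration-by-parts against $\log\mathrm{tr}\,h_\epsilon$ then yields $\|\log h_\epsilon\|_{L^\infty}$ uniformly bounded. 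Once this is in hand, a Chern--Weil identity on $Z_\epsilon$ produces a uniform $L^2$ bound on $F_{H_\epsilon}$, while interior elliptic bootstrapping for the HYM equation gives uniform $C^k_{\mathrm{loc}}$ estimates on compact subsets of $\overline{Z}\setminus\Sigma$. An Arzel\`a--Ascoli diagonal extraction provides a limiting HYM metric $H$ on $\overline{Z}\setminus\Sigma$; the boundary condition passes to the limit by local uniform convergence near $\partial Z$ (which is bounded away from $\Sigma$), and the $L^2$ curvature bound passes by lower semicontinuity.

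For uniqueness, suppose $H_1,H_2$ both solve the problem and set $h=H_1^{-1}H_2$; the standard K\"ahler identity gives $\Lap \log \mathrm{tr}(h+h^{-1})\ge 0$ on $\overline{Z}\setminus\Sigma$ with $h=\mathrm{Id}$ on $\partial Z$. Since $h$ is bounded on $\overline{Z}\setminus\Sigma$ and $\Sigma$ has real codimension at least six, $\log\mathrm{tr}(h+h^{-1})$ extends across $\Sigma$ as a bounded subharmonic function, so the maximum principle applies globally and forces $h\equiv\mathrm{Id}$.
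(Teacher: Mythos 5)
Your overall skeleton matches the paper's: reduce to Donaldson's solution of the Dirichlet problem for smooth bundles, approximate the singular situation, prove uniform $L^\infty$ equivalence with a reference metric via the almost-subharmonicity of $\log\operatorname{tr}h$, bootstrap to $C^k_{loc}$ and $L^2$ curvature bounds, extract a limit, and prove uniqueness by a maximum principle for $\operatorname{tr}\log$. The implementation of the approximation is genuinely different, though: the paper follows Bando--Siu as in Jacob--Walpuski by resolving the singular set, extending $E$ to a vector bundle $\tilde E$ on a blow-up $\tilde Z$, and degenerating a family of K\"ahler metrics $\omega_\epsilon\to\omega$ with uniform Dirichlet--Sobolev constants; you instead excise tubular neighbourhoods $N_\epsilon(\Sigma)$ and solve on the complement with artificial boundary data $H_0$ on $\partial N_\epsilon(\Sigma)$.

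As written, your excision route has two genuine gaps. First, your reference metric $H_0$ is produced by patching an arbitrary smooth metric on the locally free locus; nothing then controls $|\Lambda F_{H_0}|$ near $\Sigma$, and it can fail to be integrable. The inequality $\Lap\log\operatorname{tr}h_\epsilon\ge -|\Lambda F_{H_0}|$ is only useful if $|\Lambda F_{H_0}|$ has a bounded Green potential (or is in $L^p$ for $p$ above the dimension), i.e.\ if $H_0$ is \emph{admissible}. Constructing an admissible metric on a reflexive sheaf is precisely the nontrivial step that the resolution $\tilde Z$ is designed to handle --- a smooth metric on $\tilde E$ automatically has controlled curvature with respect to the degenerating metrics $\omega_\epsilon$ --- so your proposal quietly assumes the hard part. (Once admissibility is granted, your $L^\infty$ bound is actually immediate from the comparison principle, since $h_\epsilon=\mathrm{Id}$ on both boundary components; the cutoff/integration-by-parts mechanism you describe is not what produces it.) Second, the uniform $L^2$ curvature bound cannot be read off from ``a Chern--Weil identity on $Z_\epsilon$'': the boundary term of the topological energy formula on $\partial N_\epsilon(\Sigma)$ involves $\partial H_\epsilon H_\epsilon^{-1}$, a normal derivative that is \emph{not} determined by the Dirichlet data and is a priori uncontrolled as $\epsilon\to 0$. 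In the paper's version the only boundary is $\partial Z$, which stays in the region where everything is uniformly smooth, so the boundary term there is harmless. Finally, a smaller point: your uniqueness argument assumes $h=H_1^{-1}H_2$ is bounded, which is not part of the hypotheses; the removable-singularity step should instead be run for the distributional subharmonicity of $\operatorname{tr}\log(H_1H_2^{-1})$ using the finite $L^2$ curvature and the real codimension of $\Sigma$ being at least six, as the paper does.
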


\begin{proof}
(Sketch) Donaldson \cite{Donaldsonboundary} proved the special case when $E$ is a vector bundle using the heat flow method. The key point is that $|\Lambda F|$ is a subsolution to the heat equation with zero boundary data, which forces $|\Lambda F|$ to decay exponentially in time. This together with uniform $C^k$ control on the connection in the flow leads to long time existence and convergence to HYM connection at infinite time. Here the issue of stability does not appear.

The singularity problem can be addressed by the continuity method due to Bando and Siu \cite{BandoSiu};  we follow the exposition in \cite{JacobWalpuksi}, Section 6. The idea is to take repeated blow ups $\tilde{Z}$ in the interior of $\bar{Z}$ so that $E|_{ \bar{Z}\setminus \text{Sing}(E)  }$ extends as a vector bundle $\tilde{E}$ across the exceptional locus. Equip $\tilde{E}$ with a fixed reference Hermitian metric $\tilde{H}_1$ admitting the given boundary data. One can find a sequence of degenerating K\"ahler metrics $\omega_\epsilon\to \omega$ on $\tilde{Z}$ as $\epsilon \to 0$, such that
\begin{itemize}
\item $\omega_\epsilon$ agrees with $\omega$ on the $O(\epsilon)$ neighbourhood of the exceptional locus.
\item Near the blow up loci $\omega_\epsilon$ is locally modelled on a rescaling of the standard metric on $\text{Bl}_0 \C^k \times \C^{n-k}$.
\item The curvature of $\tilde{H}_1$ has uniform $L^2$ curvature bound as $\epsilon\to 0$.
\item The metrics $\omega_\epsilon$ have uniform Dirichlet-Sobolev constants.
\end{itemize}

Applying Donaldson's result to solve the Dirichlet problem, we obtain HYM connections $\tilde{H}_\epsilon$ on $\tilde{E}$ for the background metrics $\omega_\epsilon$. Using the almost subharmonicity estimate (in the analyst's Laplacian convention)
\[
\Lap \text{Tr} \log (\tilde{H}_\epsilon \tilde{H}_1^{-1}  ) \geq - |\Lambda F_{ \tilde{H}_1 }|, \quad  \Lap \text{Tr} \log (\tilde{H}_1 \tilde{H}_\epsilon^{-1}  ) \geq - |\Lambda F_{ \tilde{H}_1 }|,
\]
and the uniform Dirichlet-Sobolev inequality, there is a uniform $L^2$ bound on $\text{Tr}\log (\tilde{H}_\epsilon \tilde{H}_1^{-1}  )$ and $\text{Tr} \log (\tilde{H}_1 \tilde{H}_\epsilon^{-1}  )$. On any compact subset of the locally free locus of $E$, the almost subharmonicty  implies furthermore $L^\infty$ estimates on $\text{Tr}\log (\tilde{H}_\epsilon \tilde{H}_1^{-1}  )$ and $\text{Tr} \log (\tilde{H}_1 \tilde{H}_\epsilon^{-1}  )$, so that $\tilde{H}_\epsilon$ is locally uniformly equivalent to $\tilde{H}_1$ independent of $\epsilon$. Then the Bando-Siu interior estimate (\cf Appendix C, D in \cite{JacobWalpuksi}) gives $C^k_{loc}$-estimates on $\tilde{H}_\epsilon$ over the locally free locus of $E$, uniform in $\epsilon$. Furthermore, there are uniform $L^2$ curvature bounds on $\tilde{H}_\epsilon$ because of the topological energy formula for HYM connections
\begin{equation}\label{topologicalenergy}
\int_{\bar{Z}} |F_{ \tilde{H}_\epsilon }|^2 \omega_\epsilon^n= -\text{const} \cdot \int_{\bar{Z}} \text{Tr}(F_{ \tilde{H}_\epsilon} \wedge  F_{ \tilde{H}_\epsilon } ) \omega_\epsilon^{n-2}=- \text{const} \int_{\partial Z} \text{Tr} ( \partial \tilde{H}_\epsilon  \tilde{H}_\epsilon^{-1} \wedge F_{ \tilde{H}_\epsilon }   ) \wedge \omega_\epsilon^{n-2}.
\end{equation}

Now taking a subsequential weak limit as $\epsilon\to 0$, we obtain a HYM metric $H$ over the locally free locus of $E$, which must have bounded $L^2$ curvature. The uniqueness statement follows from the fact that if $H$, $H'$ are two solutions to the Dirichlet problems with finite $L^2$-curvature, then $\text{Tr} \log (H H'^{-1}  )$ and $\text{Tr} \log (H' H^{-1}  )$ are both subharmonic.
\end{proof}

\begin{thm}
There is a HYM connection $H$ on $E$ over Euclidean $\C^3$ with locally finite $L^2$ curvature, which admits the decay estimates on $\{|x|\gtrsim 1\}$:
\begin{equation}\label{asymptoticestimate}
|\nabla^k_E  \log ( H H_0^{-1}   ) |\leq C(k)( |x|+|y|+|z|^{1/2}    )^{-k}|\vec{x}|^{-1} \max(1, \log  \frac{|\vec{x}}{|x|+|y|+|z|^{1/2}}) , \quad k\geq 0.
\end{equation}
In particular, the tangent cone at infinity is flat. The asymptotic estimate (\ref{asymptoticestimate}) and the HYM condition $\Lambda F_H=0$ determine $H$ uniquely.
\end{thm}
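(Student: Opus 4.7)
The plan is to construct $H$ as a subsequential limit of solutions to Dirichlet problems on an exhaustion of $\C^3$, exactly along the lines of \cite{Ni}. I would fix an exhaustion by smooth bounded domains $\Omega_R\subset \C^3$, for instance Euclidean balls of radius $R\to \infty$, and apply the previous theorem on each $\Omega_R$ to the reflexive sheaf $E|_{\Omega_R}$ with Dirichlet data $H_0|_{\partial \Omega_R}$. This yields HYM metrics $H_R$ on the locally free locus $\Omega_R\setminus\{0\}$ with finite $L^2$ curvature. The task is then to extract uniform $R$-independent estimates for $s_R:=\log(H_R H_0^{-1})$, so that a subsequence converges smoothly on compact subsets of $\C^3\setminus\{0\}$ to the desired $H$.

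The analytic engine is the standard pair of almost-subharmonic inequalities
\[
\Lap \text{Tr}(\pm s_R) \geq -C|\Lambda F_{H_0}|,
\]
which, in view of Lemma \ref{meancurvaturebound}, produces a non-negative subharmonic majorant for $|s_R|$ whose source is controlled by $\ell$. The function $G$ of Lemma \ref{barrier} enters as a global barrier: $CG-|s_R|$ is superharmonic on $\Omega_R$ and non-negative on $\partial \Omega_R$ once $C$ is chosen large enough, so the comparison principle yields $|s_R|\leq CG$ throughout $\Omega_R$. By Lemma \ref{barrier} this is exactly the $C^0$ part of (\ref{asymptoticestimate}). Near the origin, local $L^\infty$ control of $s_R$ from local $L^2$ control is provided by the Bando-Siu interior theory, exactly as in the proof of the previous theorem.

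With the $C^0$ bound in hand, the higher-derivative estimates in (\ref{asymptoticestimate}) follow from a rescaling argument matched to the anisotropic asymptotics of Section 2.1. At a point $p$ with $|\vec{x}(p)|\gtrsim 1$ and regularity scale $\rho(p):=|x(p)|+|y(p)|+|z(p)|^{1/2}$, I would rescale a ball of size comparable to $\rho(p)$ around $p$ to unit size. Corollary \ref{meancurvaturehigherorder}, together with the asymptotic models for $H_0$ in the generic region and near the $z$-axis, shows that the background HYM problem in this rescaling is a uniformly small perturbation of either the flat HYM equation or the standard ADHM one-instanton, with all constants uniform in $p$. Standard elliptic regularity for the linearised HYM operator (Bando-Siu interior estimates, Appendix C, D of \cite{JacobWalpuksi}) then upgrades $|s_R|\leq CG$ to the full (\ref{asymptoticestimate}), uniformly in $R$. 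A diagonal subsequential limit on the locally free locus produces the desired metric $H$, and local finiteness of its $L^2$ curvature follows from the corresponding property of $H_0$ together with the uniform equivalence $H\sim H_0$.

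For uniqueness, given two HYM metrics $H,H'$ obeying (\ref{asymptoticestimate}), set $s:=\log(HH'^{-1})$. The standard Bando-Siu computation shows that $\text{Tr}(e^{s})$ and $\text{Tr}(e^{-s})$ are subharmonic on the locally free locus; the isolated singularity at the origin is removable because both metrics have locally finite $L^2$ curvature, while (\ref{asymptoticestimate}) forces both traces to tend to $\text{rank}\, E$ at spatial infinity, whereupon a Liouville-type maximum principle forces $s\equiv 0$. The principal obstacle I anticipate is the higher-derivative component of (\ref{asymptoticestimate}) with the anisotropic scale $\rho(p)^{-k}$ near the $z$-axis, where matching to the ADHM one-instanton rather than to a flat background is essential and relies in an unavoidable way on the $U(1)$-twisted reformulation of the monad developed in Section 2.1.
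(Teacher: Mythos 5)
Your proposal follows essentially the same route as the paper: Dirichlet problems on an exhaustion of $\C^3$ with boundary data $H_0$, the almost-subharmonicity inequality with source controlled by $\ell$, the barrier $G$ of Lemma \ref{barrier} together with the comparison principle for the $C^0$ bound, rescaled Bando--Siu interior estimates at the anisotropic scale $|x|+|y|+|z|^{1/2}$ for the higher derivatives, a diagonal subsequential limit, and the same two-sided subharmonicity argument for uniqueness. The one slip is your justification of local $L^2$-curvature finiteness: uniform equivalence $H\sim H_0$ alone does not control curvature, and the paper instead invokes the topological (Chern--Weil) energy formula (\ref{topologicalenergy}), which bounds the $L^2$ curvature of the HYM solutions by boundary terms uniformly in $R$.
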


\begin{proof}
Recall $H_0$ is the natural Hermitian metric for the cohomology sheaf $E$ of our monad (\ref{monad}), whose curvature is $F_E$. 
We solve the Dirichlet problem on large balls $B(R)\subset \C^3$ with boundary data $H_0$ on $\partial B(R)$, and denote the solution as $H_R$. Crucially we need the almost subharmonicity estimate  (\cf Lemma 2.5 in \cite{Ni}, and Lemma \ref{meancurvaturebound}):
\begin{equation}\label{subharmonicity}
\begin{cases}
\Lap \log  \text{Tr} (H_R H_0^{-1}) \geq  -|\Lambda F_E|\geq - C\ell  ,
\\
\Lap \log \text{Tr} (H_0 H_R^{-1}) \geq - C\ell ,
\end{cases}
\end{equation}
Notice near the origin (\ref{subharmonicity}) continues to hold in the distributional sense, since the $L^2$ curvature of $H_0$ and $H_R$ are finite, and the singularity has complex codimension 3 (\cf proof of Proposition 3.1 in \cite{JacobWalpuski2}). Using the boundary condition $\text{Tr} (H_R H_0^{-1})= \text{Tr} (H_0 H_R^{-1})=\text{rank}(E)=2$, we apply 
Lemma \ref{barrier} and the comparison principle to get
\[
\log  \frac{\text{Tr} (H_R H_0^{-1})}{2} \geq -CG, \quad  \log \frac{\text{Tr} (H_0 H_R^{-1})}{2} \geq -CG,
\]
or equivalently there is a \emph{pointwise estimate} on $B(R)$:
\begin{equation}
e^{-CG} H_0 \leq H_R\leq e^{CG} H_0, 
\end{equation}
which is \emph{uniform} as $R\to \infty$. Using the upper bound for $|G|$ in $\{ |x|\gtrsim 1 \}$ provided by Lemma \ref{barrier},
\[
|\log (H_R H_0^{-1}) |\leq C |\vec{x}|^{-1}\max(1, \log \frac{|\vec{x}|}{|x|+|y|+|z|^{1/2}  } ).
\]

Recall also the higher order control on the mean curvature of the ansatz in Lemma \ref{meancurvaturehigherorder}. Applying Bando and Siu's interior regularity estimate (\cf Appendix C, D in \cite{JacobWalpuksi}) to rescaled balls, we derive the higher order estimate on $\{ 1\lesssim |\vec{x}|< R/2 \}$:
\[
|\nabla^k_E  \log ( H_R H_0^{-1}   ) |\leq C(k)   ( |x|+|y|+|z|^{1/2}    )^{-k} |\vec{x}|^{-1}\max(1, \log \frac{|\vec{x}|}{|x|+|y|+|z|^{1/2}  } )   , \quad k\geq 0.
\]
Using the topological energy formula similar to (\ref{topologicalenergy}), the $L^2$-curvature inside the unit ball is controlled. All estimates are uniform in $R$. Taking a subsequential limit as $R\to \infty$, we obtain the HYM connection $H$ with estimates (\ref{asymptoticestimate}). Since the deviation is so small that the leading order asymptotic geometry at infinity is unchanged, and in particular $|F_H|=O( \frac{ |\vec{x}|}{ (|x|^2+|y|^2+|z|)^2   }  )$ as $|\vec{x}|\to \infty$, we see the tangent cone at infinity is flat.

To see the uniqueness, notice if $H'$ is another HYM metric satifying (\ref{asymptoticestimate}), then both $\log \frac{ \text{Tr}  (H'H^{-1})}{2}$ and $\log \frac{\text{Tr} (HH'^{-1}) }{2}$ are subharmonic in the distributional sense, and are asymptotic to zero at infinity.
\end{proof}

The \emph{tangent cone connection at the origin} is determined a priori by complex geometry of the reflexive sheaf $E\simeq \ker(\C^3\xrightarrow{(x,y,z)} \C)$ (\cf \cite{Chen}\cite{JacobWalpuski2}). Complex geometrically, it is isomorphic to $E$, or equivalently the pullback of the cotangent bundle on $\mathbb{CP}^2$. The Levi-Civita connection on $\Omega_{\mathbb{P}^2}$ is HYM with respect to the integral Fubini-Study metric $\omega_{FS}$:
\[
\Lambda_{\omega_{FS}} F_{\Omega_{\mathbb{P}^2} }= 4\pi \mu \cdot \text{Id}_{ \Omega_{\mathbb{P}^2}}  , \quad \mu= \frac{\text{degree} }{\text{rank}  }(\Omega_{\mathbb{P}^2} )= -\frac{3}{2}.
\]
The tangent cone at the origin is the pullback of the Levi-Civita connection, up to a conformal change of the Hermitian structure by a factor $|\vec{x}|^{-3}$ which cancels out the Einstein constant.
Equivalently, the tangent cone is the natural connection on the kernel of 
$
\underline{\C^3}\xrightarrow{(x,y,z)} \underline{\C},
$
but $\underline{\C^3}$ is equipped with the \emph{nonstandard Hermitian structure} $\text{diag}( \frac{1}{|\vec{x}|}, \frac{1}{|\vec{x}|}, \frac{1}{|\vec{x}|}    )$. An application of \cite{JacobWalpuski2} shows that our HYM connection on $E$ is asymptotic to the tangent cone at the origin with polynomial decay rate.

%The pullback of the Levi-Civita connection to $\C^3\setminus \{0\}$ is HYM with respect to the standard Euclidean metric $\omega= \frac{\sqrt{-1}}{2}(dx\wedge d\bar{x}+dy\wedge d\bar{y}+dz\wedge d\bar{z})$, with Einstein constant

Some additional insights can be gained by studying the \emph{growth rate of holomorphic sections} as in \cite{Chen}. On a reflexive sheaf with a conical HYM connection, there is a convexity estimate (\cf Proposition 3.5 in \cite{Chen})
\[
(\int_{B_{1/4}} |s|^2)( \int_{B(1)} |s|^2   ) \geq ( \int_{B(1/2)} |s|^2   )^2. 
\]
A basic heuristic in \cite{Chen} is that
if a singular HYM connection is sufficiently close to being conical on a certain scale, then the convexity behaviour transfers to the HYM connection, so that $\log \int_{B(r) } |s|^2/ \log r$ has some monotonicity property, and one can define the \emph{local growth degree}
\[
d(s)= \frac{1}{2} \lim_{r\to 0} \frac{ \log \int_{B(r) } |s|^2 }{\log r  } -\dim_\C
\]
which induces a \emph{filtration on the germ of holomorphic sections},  intimately related to a Harder-Narasimhan-Seshardri filtration.

In the setting of our example, this motivates us to define the growth degree at infinity
\[
d_\infty (s)= \frac{1}{2} \lim_{r\to \infty} \frac{ \log \int_{B(r) } |s|^2 }{\log r  } -\dim_\C= \frac{1}{2}  \lim_{r\to \infty} \frac{ \log \int_{B(r) } |s|^2 }{\log r  } -3 ,
\] 
inducing a filtration on the holomorphic sections with \emph{at most polynomial growth}. It is then easy to check explicitly that there are holomorphic sections whose growth degree at infinity is larger than the growth degree at the origin, so \emph{the two filtration structures are different}. We expect that there are large classes of examples generalizing our construction, and these filtration structures should play a major role in a more systematic theory. In particular, it would be interesting to relate the filtration and the tangent cone at infinity.

\end{document}